\theoremstyle{plain}
\newtheorem*{theorem*}{Theorem}
\newtheorem*{proposition*}{Proposition}
\newtheorem*{remark*}{Remark}
\newtheorem*{lemma*}{Lemma}
\newtheorem{theorem}{Theorem}[section]
\newtheorem{definition}[theorem]{Definition}
\newtheorem{lemma}[theorem]{Lemma}
\newtheorem{example}[theorem]{Example}
\newtheorem{conjecture}[theorem]{Conjecture}
\newtheorem{remark}[theorem]{Remark}
\title{Roots of Unity and Projective Equivalence}
\author{David Hubbard}
\date{February 2024}
\begin{document}

\begin{abstract}
    We say that sets $S$ and $S'$ of points in $\mathbb{P}^1$ are \textit{projectively equivalent} if there exists a projective automorphism $\gamma$ such that $\gamma(S)=S'$. In \cite{fu2022projective}, Fu proves that, if $S$ and $S'$ both consist of roots of unity, then the largest possible cardinality for $S$ is 14 (after discounting automorphisms $\gamma$ that act bijectively on the set of all roots of unity $\gamma$). Moreover, Fu constructs the two possible maximal sets, which are unique up to projective equivalence. In this article, we given an elementary proof that the cardinality of two such sets is at most 18 using the methods of Beukers and Smyth \cite{beukers2002cyclotomic}. Moreover, we show precisely how their method fails to give the tightest bound in the maximal cases of Fu.
\end{abstract}

\maketitle
\onehalfspacing

\section{Introduction}\label{sIntroduction}

In \cite{fu2022projective}, Fu proves the following:
\begin{theorem}\label{tFuTheorem}
    Let $\mu_\infty\subseteq \mathbb{P}^1$ denote the roots of unity. Suppose $S,S'\subseteq \mu_{\infty}$ are projectively equivalent i.e.\@ there exists a projective automorphism $\gamma$ such that $\gamma(S)=S'$. Suppose further that $\gamma(\mu_{\infty})\neq \mu_{\infty}$. Then:
    \[|S|\leq 14\]
    Moreover, up to projective equivalence, there are two possibilities for $|S|=14$:
    \begin{enumerate}
        \item 
            \[S_1 =\{1,\zeta,\zeta^2,\zeta^3,\zeta^4,\zeta^5,\zeta^6,\zeta^9,\zeta^{11},\zeta^{14},\zeta^{18},\zeta^{22},\zeta^{25},\zeta^{27}\}\]
        where $\zeta$ is a primitive $30$-th root of unity;

        \item 
        \[S_2 = \{1,\omega,\omega^{2},\omega^{4},\omega^6,\omega^8,\omega^{12},
            \omega^{22},\omega^{31},\omega^{40}, \omega^{50},\omega^{54},\omega^{56}, \omega^{58}\}\]
        where $\omega$ is a primitive $60$-th root of unity;
        
    \end{enumerate}
\end{theorem}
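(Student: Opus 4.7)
\emph{Proof proposal.} A projective automorphism $\gamma(z)=(az+b)/(cz+d)$ sends $z$ to $w$ if and only if
\[
az + b - c\,zw - dw \;=\; 0.
\]
So each pair $(z_i,w_i) = (z_i,\gamma(z_i))$ with both coordinates in $\mu_\infty$ provides a $\mathbb{C}$-linear constraint on $(a,b,c,d)$ with coefficient row $(z_i,\,1,\,-z_iw_i,\,-w_i)$. Given $|S|=n$ such pairs, the resulting $n\times 4$ matrix must have rank at most $3$ for a nontrivial $\gamma$ to exist, so every $4\times 4$ minor vanishes. The plan is to treat these minors as a rich system of algebraic relations among roots of unity and analyze them with the methods of \cite{beukers2002cyclotomic}.

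Concretely, expanding a vanishing $4\times 4$ minor by the Leibniz rule gives a signed sum of $24$ monomials, each a product of two $z_i$'s and two $w_j$'s, hence a product of four roots of unity. Decomposing such a sum into minimal vanishing subsums and applying the Conway--Jones classification restricts the primes dividing the orders of the participating roots of unity to a very small set. Combining the constraints coming from many choices of four rows should force every $z_i$ and $w_i$ to lie in some fixed $\mu_N$ with $N$ bounded uniformly in $n$.

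Once all coordinates of the pairs are pinned to a common $\mu_N$, the problem becomes purely finite-combinatorial: enumerate the $\gamma\in\mathrm{PGL}_2(\mathbb{C})$ whose graphs meet $\mu_N\times\mu_N$ in many points, discard those with $\gamma(\mu_\infty)=\mu_\infty$, and read off the maximum. The two extremal configurations $S_1\subseteq\mu_{30}$ and $S_2\subseteq\mu_{60}$ should emerge from the $N=30$ and $N=60$ cases, with the side condition $\gamma(\mu_\infty)\neq\mu_\infty$ killing off the otherwise infinite families coming from rotations and inversion.

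The main obstacle, and precisely what the abstract flags, is the quantitative sharpening. A direct Conway--Jones analysis of a single $24$-term minor only yields a bound of roughly $18$ rather than $14$, because short minimal vanishing subsums among roots of unity permit more freedom than the extremal configurations actually enjoy. Closing the gap from $18$ down to $14$ appears to require exploiting simultaneous consistency across \emph{all} $\binom{n}{4}$ minors, ruling out the near-extremal configurations with $|S|\in\{15,\ldots,18\}$ by some global rigidity argument (for instance a descent on the conductor $N$, or a coincidence forced on the cross-ratios of every quintuple in $S$) that the local Beukers--Smyth framework by itself cannot detect. Fu's original argument in \cite{fu2022projective} presumably injects exactly this extra structure; reproducing it is the hard part of the proof.
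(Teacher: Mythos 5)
There is a genuine gap, and in fact two. First, note that the statement you are asked to prove is Fu's theorem, which this paper does not reprove: it is quoted from \cite{fu2022projective}, and the paper's own contribution is only the weaker uniform bound of $18$, obtained by passing to the curve $X=\{(ax+b)-(cx+d)y=0\}$, excluding the subgroup $H$ of M\"obius maps preserving $\mu_\infty$, splitting on the field of definition, and applying Galois conjugation ($\zeta\mapsto-\zeta$, $\zeta\mapsto\pm\zeta^2$) together with a toric B\'ezout bound in the style of Beukers--Smyth. Your proposal never gets to a proof of the actual statement: you yourself concede that the passage from roughly $18$ down to $14$, and the classification of the two extremal sets $S_1\subseteq\mu_{30}$, $S_2\subseteq\mu_{60}$, ``is the hard part of the proof'' and would have to be imported from Fu. A plan that defers exactly the content of the theorem is not a proof of it; the paper is explicit (Remark \ref{rMethodsofBeukersSmyth} and Section \ref{sMaximalCases}) that the Beukers--Smyth framework provably cannot reach $14$, since in the maximal examples the auxiliary intersections $X\cap X_i$ genuinely overcount or contain extra non-torsion points.

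Second, the intermediate step you do sketch is unsound as stated. You claim that the vanishing of the $4\times4$ minors, analyzed via Conway--Jones, ``should force every $z_i$ and $w_i$ to lie in some fixed $\mu_N$ with $N$ bounded uniformly in $n$.'' This is false without first invoking the hypothesis $\gamma(\mu_\infty)\neq\mu_\infty$: for $\gamma(z)=\zeta z$ with $\zeta$ a root of unity of arbitrarily large order, every row $(z_i,1,-z_iw_i,-w_i)$ with $w_i=\zeta z_i$ lies in a fixed $3$-dimensional space, all minors vanish, and the orders of the $z_i$ are unbounded. So the conductor-bounding step must already use the exclusion of the subgroup $H$ (as the paper does at the very start, via Lemma \ref{lToricAutomorphisms}), whereas your plan only discards such $\gamma$ at the final enumeration stage --- the ordering is circular. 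Moreover, even granting a bounded $N$, ``enumerate the $\gamma$ whose graphs meet $\mu_N\times\mu_N$ in many points'' is precisely Fu's algorithmic computation, not a routine finite check, and a Conway--Jones analysis of $24$-term vanishing sums (with minimal subsums involving primes up to $23$) is far from the ``very small set'' of primes you assert. To salvage the write-up you would either need to carry out that analysis in detail, or follow the paper's route: prove the $18$ bound via the curve $X$ and its conjugate translates, and cite Fu for the sharp bound $14$ and the classification of $S_1$ and $S_2$.
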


Fu's method is algorithmic in nature, however does provide explicitly the sharpest bounds, and computes the maximal examples. In this article, we will provide a completely abstract proof that the cardinality of (non-trivially) projectively equivalent sets of roots of unity are uniformly bounded, which is comparatively much shorter than Fu's exposition, with the caveat that we obtain a less sharp bound of 18. 

\begin{remark}\label{rTorsionPointsofGm}
    Let $\mathbb{G}_{m}$ denote the algebraic torus over $\mathbb{C}$. Then the torsion subgroup $\mathbb{G}_{m}[\infty]$ is given by $\mu_{\infty}$. To view the torsion subgroup in $\mathbb{P}^1$, we make a choice of embedding $\mathbb{G}_{m}\hookrightarrow \mathbb{P}^{1}$ i.e.\@ a choice of homogeneous coordinates:
    \begin{align*}
        \mathbb{G}_{m} &\longrightarrow \mathbb{P}^{1}\\
        z &\longmapsto [1:z]
    \end{align*}
    Any such embedding is unique up to a projective automorphism of $\mathbb{P}^{1}$ (i.e.\@ a change of coordinates).
\end{remark}

In view of this remark, we reframe Fu's result as follows: Let $\iota_{j}:\mathbb{G}_{m}\hookrightarrow \mathbb{P}^{1}$ ($j=1,2$) be two embeddings as above. Then there exists a uniform bound on the number of common torsion points:
\[\iota_1(\mathbb{G}_{m}[\infty])\cap \iota_2(\mathbb{G}_{m}[\infty])\]
as long as the intersection is not infinite.

The analogue of this result for elliptic curves was initially discussed by Bogomolov, Fu and Tschinkel \cite{bogomolov2018torsion}. Suppose that $E$ is an elliptic curve over $\mathbb{C}$ and denote by $\iota$ the involution of $E$. Then there exists a double cover $\pi:E\xrightarrow{}\mathbb{P}^1$ such that $\pi \circ \iota = \pi$. Again, the choice of such a double cover is unique up to projective automorphism. Given two such double covers $\pi_j:E_j\xrightarrow{}\mathbb{P}^{1}$ ($j=1,2$), Bogomolov, Fu and Tschinkel conjectured the existence of a uniform bound on the number of `common' torsion points on $E_1$ and $E_2$ in $\mathbb{P}^1$:
\begin{conjecture}
    There exists a constant $c>0$, independent of $E_j$ and $\pi_j$, such that:
    \[\left| \pi_1(E_1[\infty])\cap \pi_2(E_2[\infty])\right| \leq c\]
    for all pairs $(E_j,\pi_j)$ where this intersection is finite.
\end{conjecture}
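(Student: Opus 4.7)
The plan is to translate the problem into arithmetic dynamics on $\mathbb{P}^1$. For each pair $(E_j, \pi_j)$, the multiplication-by-$n$ map on $E_j$ commutes with the involution $\iota_j$ and therefore descends through $\pi_j$ to a rational self-map $\phi_{j,n}: \mathbb{P}^1 \to \mathbb{P}^1$, the associated Latt\`es map. Under this correspondence, $\pi_j(E_j[\infty])$ coincides with the set $\mathrm{Preper}(\phi_{j,n})$ of preperiodic points of $\phi_{j,n}$ (for any fixed $n \geq 2$), so the conjecture becomes a uniform bound on $|\mathrm{Preper}(\phi_{1,n}) \cap \mathrm{Preper}(\phi_{2,n})|$ whenever this intersection is finite. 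This reframing is attractive because it places the problem inside a well-developed framework, parallel to the way the proof in this paper reduces the torsion question for $\mathbb{G}_m$ to cyclotomic arithmetic.

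The first substantive step will be to establish a qualitative, non-uniform version: if the intersection is infinite, then the canonical equilibrium measures $\mu_{\phi_{1,n}}$ and $\mu_{\phi_{2,n}}$ on $\mathbb{P}^1$ must coincide. This should follow from arithmetic equidistribution for points of small height, in the form of Baker-Rumely, Favre-Rivera-Letelier, and Yuan-Zhang, applied to the canonical heights attached to the two Latt\`es systems. Because the equilibrium measure of a Latt\`es map essentially determines the underlying elliptic curve up to isogeny together with its degree-two quotient to $\mathbb{P}^1$, coincidence of measures forces a rigid structural compatibility between $(E_1,\pi_1)$ and $(E_2,\pi_2)$, and one then checks that such compatibility can only produce an infinite intersection in the degenerate cases explicitly excluded by the hypothesis.

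The hard part — and the reason the conjecture is still open in general — is promoting this qualitative finiteness to a uniform constant $c$ independent of $(E_j,\pi_j)$. Two natural routes suggest themselves. The first is an effective arithmetic equidistribution statement, with a quantitative rate of convergence controlled by Arakelov-theoretic invariants of the Latt\`es maps, from which an explicit numerical bound on common preperiodic points would follow. The second is a Pila-Zannier style o-minimality argument, working in a moduli space parametrising pairs $(E_j,\pi_j)$ and exploiting the fact that pairs with many common torsion images should accumulate along a proper algebraic subvariety. Either route requires genuinely new input beyond the $\mathbb{G}_m$ techniques used in this paper: the Beukers-Smyth style arguments rely essentially on the explicit multiplicative structure of roots of unity, which has no direct analogue among elliptic torsion points, so one cannot hope to adapt the bound of $18$ established here without invoking substantial equidistribution or model-theoretic machinery.
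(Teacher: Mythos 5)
You have not proved the statement, and in fact the paper does not prove it either: it is stated as a conjecture (due to Bogomolov, Fu and Tschinkel) purely as motivation, with the paper's own work concerning the $\mathbb{G}_m$ analogue; the paper merely cites DeMarco--Krieger--Ye for the genus-two case and Poineau for the general case as sources of (non-effective) uniform bounds. Your proposal is a programme rather than an argument. The reduction to Latt\`es maps and the identification $\pi_j(E_j[\infty])=\mathrm{Preper}(\phi_{j,n})$ are fine, and the equidistribution step (an infinite intersection forces $\mu_{\phi_{1,n}}=\mu_{\phi_{2,n}}$, hence a rigid relation between the two pairs) is sound, but it only re-derives qualitative finiteness, which the paper already obtains more simply from Manin--Mumford applied to the curve $X=(\pi_1\times\pi_2)^{-1}(\Delta)\subseteq E_1\times E_2$ of genus $\geq 2$. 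The entire content of the conjecture is the uniformity of the constant $c$ over all pairs $(E_j,\pi_j)$, and at exactly that point you stop: you name two possible routes (effective equidistribution with Arakelov-controlled error terms, or a Pila--Zannier argument in a moduli space of pairs) without carrying either out. That is a genuine gap, not a stylistic omission --- nothing in your text produces, or even reduces to a checkable statement, a bound independent of the pair.

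Two smaller corrections. First, your framing that the conjecture ``is still open in general'' conflicts with the paper's own discussion: a uniform (ineffective) bound is known --- DeMarco, Krieger and Ye handle the $g=2$ case precisely by the quantitative-equidistribution route you sketch, and Poineau treats the general case --- and what remains open is an effective or sharp constant, in contrast with Fu's explicit bound of $14$ for $\mathbb{G}_m$. Second, if you intend this as a proof sketch along the DeMarco--Krieger--Ye lines, the crucial missing ingredient is the quantitative comparison of the two canonical heights (an inequality of the form: if the measures, equivalently the height functions, are distinct, then only boundedly many points can have both heights small, with the bound controlled uniformly in the Arakelov--Zhang pairing); without stating and proving such an inequality, the passage from ``measures coincide or the intersection is finite'' to ``the intersection has at most $c$ elements'' does not exist.
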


It is easy to give criteria for when this intersection is finite. Consider the curve $X:=(\pi_1 \times \pi_2)^{-1}(\Delta) \subseteq E_1 \times E_2$ where $\Delta \subseteq \mathbb{P}^1\times \mathbb{P}^1$ is the diagonal. In \cite{bogomolov2018torsion}, it is shown that $X$ has geometric genus $g=5-\delta$ where $\delta:= |\pi_1(E_1[2])\cap \pi_2(E_2[2])|\in \{0,1,2,3,4\}$. In particular, when $\pi_1(E_1[2])\neq \pi_2(E_2[2])$, $X$ is integral of genus $g\geq 2$ and one has that
\[X\cap (E_1[\infty]\times E_2[\infty])\]
is finite by the Manin-Mumford conjecture. Clearly, it is equivalent to uniformly bound the torsion points on the curves $X$ induced by pairs $(E_j, \pi_j)$. A uniform bound was given by DeMarco, Krieger and Ye \cite{demarco2020uniform} for the $g=2$ case and the general case has a uniform bound by Poineau \cite{poineau2022dynamique}, for example. However, the bounds obtained are not minimal or effective unlike the bounds of Fu in the case of $\mathbb{G}_{m}$, and finding an effective uniform bound is still an open problem. 

Returning to the case of $\mathbb{G}_m$, we analogously reframe our problem by counting torsion points on a curve $X\hookrightarrow (\mathbb{G}_{m})^{2}$ where $X=(\iota_1 \times \iota_2)^{-1}(\Delta)$ using the notation of Remark \ref{rTorsionPointsofGm}, and $\Delta \subseteq \mathbb{P}^1\times \mathbb{P}^1$ is the diagonal. Counting torsion points/pairs of roots of unity on an algebraic curve in the $2$-torus has been studied extensively by Beukers and Smyth in \cite{beukers2002cyclotomic}. From this perspective, it is easy to deduce a uniform bound.

\begin{remark}\label{rMethodsofBeukersSmyth}
    In this article we illustrate in full the method of Beukers and Smyth for our curve $X$. We do this for two reasons: We are able to make a slight refinement on the bounds one obtains from applying their method directly (one would obtain a bound of $22$ which we improve to $18$). Moreover, we are able to see why their method fails to provide the sharpest bound in our specific case which we discuss in Section \ref{sMaximalCases}.
\end{remark}

\section*{Acknowledgments}

The author was funded through the Warwick Mathematics Institute, Centre for Doctoral Training, and the UK Engineering and Physical Sciences Research Council studentship (EP/W523793/1).

\section{Roots of Unity and Projective Equivalence}\label{sRootsofUnity}

Recall that the automorphisms of $\mathbb{P}^{1}$ are given by the M\"obius transformations:
\[\textnormal{M\"ob}(\mathbb{C}) = \biggl\{ \frac{ax+b}{cx+d} \mid ad-bc \neq 0 \biggr\}\]

Let $H$ denote the subgroup of $\textnormal{M\"ob}(\mathbb{C})$ generated by:
\[H=\Bigl\langle \zeta x, \frac{1}{x} \mid \zeta \in \mu_{\infty}\Bigr\rangle \]

We will prove the following theorem:
\begin{theorem}\label{tMainTheorem}
    Let $\gamma \in \textnormal{M\"ob}(\mathbb{C}) \setminus H$. Then we have the following uniform bound:
    \[| \gamma(\mu_{\infty})\cap \mu_{\infty} | \leq 18\]

    Moreover, if $\gamma \in H$ then $\gamma(\mu_{\infty})\cap \mu_{\infty}$ is infinite.
\end{theorem}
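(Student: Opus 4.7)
My plan is to prove the two halves of Theorem~\ref{tMainTheorem} separately. The infinite-intersection claim for $\gamma \in H$ is immediate: the two generators $x\mapsto\zeta x$ (for $\zeta\in\mu_\infty$) and $x\mapsto 1/x$ of $H$ each permute $\mu_\infty$ as a set, so every $\gamma \in H$ satisfies $\gamma(\mu_\infty) = \mu_\infty$, and hence $\gamma(\mu_\infty) \cap \mu_\infty = \mu_\infty$ is infinite.

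For the uniform bound when $\gamma \notin H$, write $\gamma(x) = (ax+b)/(cx+d)$ with $ad-bc\neq 0$ and rephrase the count as the number of torsion points on the bilinear curve
\[X := \{(x,y) \in \mathbb{G}_m^2 : F(x,y) = cxy + dy - ax - b = 0\},\]
via the bijection $\xi \mapsto (\xi,\gamma(\xi))$. My first step is to show that $X$ is itself a one-dimensional torsion coset of $\mathbb{G}_m^2$ precisely when $\gamma \in H$: such cosets have the form $x^m y^n = \zeta$ with $\gcd(m,n)=1$ and $\zeta \in \mu_\infty$, and matching these against $F$ leaves only the cases $y = \zeta x$ (i.e.\ $\gamma(x) = \zeta x$) and $xy = \zeta$ (i.e.\ $\gamma(x) = \zeta/x$), which together generate $H$. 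Consequently, for $\gamma \notin H$ the set $X \cap \mu_\infty^2$ is finite, and bounding its size will prove the theorem.

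To extract an explicit bound, I apply the Beukers--Smyth technique to $F$, treating $F(\xi,\eta)=0$ as a vanishing sum of four roots of unity with fixed nonzero coefficients $c,-a,d,-b$. After normalising so that $a,b,c,d$ lie in some cyclotomic field $\mathbb{Q}(\mu_N)$ (which is possible once three distinct torsion solutions are in hand, since $\gamma$ is determined by its values at three points), the Galois automorphism $\sigma_p \in \mathrm{Gal}(\mathbb{Q}(\mu_\infty)/\mathbb{Q})$ for a prime $p \nmid N$ carries torsion points of $X$ to torsion points on the Galois-conjugate curve $\sigma_p X$. Intersecting $X$ with sufficiently many such conjugates, and invoking the Conway--Jones classification of minimal vanishing sums of at most four roots of unity, yields a finite list of orbit-shapes for torsion points on $X$ tallying to a direct bound of $22$. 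The sharpening to $18$ arises by discarding those Conway--Jones configurations in which a proper subsum of $c\xi\eta,-a\xi,d\eta,-b$ is forced to vanish, since such a vanishing either kills one of $a,b,c,d$ (excluded in the generic case) or collapses $\gamma$ into a degenerate family handled separately.

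I expect the main obstacle to be the combinatorial bookkeeping in this final case analysis: the non-generic vanishings of individual coefficients must be tracked separately (where $F$ degenerates to three monomials and a three-term Conway--Jones argument applies), Galois orbits shared between distinct Conway--Jones cases must not be double-counted, and one must verify which orbit-shapes actually survive the rigidity of the unit-square Newton polygon of $F$. It is precisely this polygon rigidity that produces the saving from $22$ to $18$, and (as Section~\ref{sMaximalCases} will make explicit) also accounts for the unbridged gap down to Fu's optimal $14$.
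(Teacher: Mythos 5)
Your first half is fine (indeed simpler than the paper's route: the generators of $H$ visibly permute $\mu_\infty$, so $\gamma(\mu_\infty)=\mu_\infty$ for $\gamma\in H$), and your reduction to counting torsion points on the bilinear curve $X=\{cxy+dy-ax-b=0\}$, together with the observation that $X$ is a torsion coset exactly when $\gamma\in H$ and that three common torsion points force the normalised coefficients into $\mathbb{Q}^{ab}$, all match or legitimately replace the paper's Lemma \ref{lToricAutomorphisms} and Case (i). The problem is the core quantitative step, which you assert rather than prove. The Conway--Jones classification concerns vanishing sums $\sum n_i\zeta_i=0$ with \emph{rational integer} coefficients; in $c\xi\eta-a\xi+d\eta-b=0$ the weights $a,b,c,d$ are arbitrary cyclotomic numbers, so the four terms are not roots of unity up to rational scaling and the classification does not apply. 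Expanding $a,b,c,d$ as integer combinations of roots of unity produces sums whose length grows with $N=$ the conductor of the field of definition, and no uniform, $N$-independent bound falls out without substantial further argument. Relatedly, intersecting $X$ with Galois conjugates $\sigma_p X$ does not by itself capture a torsion point $P\in X$: $\sigma_p$ sends $P$ to $\sigma_p(P)\in X^{\sigma_p}$, not $P$ itself, so one must pull back along the squaring/sign maps --- this is precisely the Beukers--Smyth construction the paper uses, namely the auxiliary curves $f(\pm x,\pm y)$ and $f^{\sigma}(\pm x^2,\pm y^2)$ (with a case split on whether the minimal conductor $N$ is odd or divisible by $4$), whose intersections with $X$ are bounded by a toric B\'ezout estimate via Newton polytopes, giving $3\times 2+4\times 4=22$.

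The numbers $22$ and $18$ in your sketch therefore appear without derivation, and the mechanism you propose for the saving (discarding Conway--Jones configurations with vanishing proper subsums, ``Newton polygon rigidity'') is not the reason the improvement holds and is not justified as stated: in the paper the refinement comes from the elementary observation that $X\cap X_1$ and $X\cap X_2$ (for $f_1=f(x,-y)$, $f_2=f(-x,y)$) lie entirely outside $(\mathbb{C}^*)^2$ --- e.g.\ $X\cap X_1=\{(-b/a,0)\}$ --- so those two intersections contribute no torsion points, leaving $2+4\times 4=18$. A vanishing proper subsum such as $c\xi\eta+d\eta=0$ does not ``kill'' a coefficient; it pins $\xi$ to a single value, which is a different (and pointwise) phenomenon. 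To complete your argument you would need either to carry out the weighted/expanded Conway--Jones analysis uniformly in $N$ (a significant undertaking not sketched here), or to switch to the conjugation-plus-toric-B\'ezout counting, including the degenerate rank-one cases (two of $a,b,c,d$ vanishing) and the $N$ odd versus $4\mid N$ dichotomy, which is what the paper actually does.
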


We will use the following set-up:

Let $\Delta \subseteq \mathbb{P}^{1}\times \mathbb{P}^{1}$ be the diagonal. Using homogeneous coordinates $([x_{0}:x_{1}],[y_{0}:y_{1}])$, the diagonal is given by $\Delta=\{G=0\}$ where:
\[G=x_{0}y_{1}-x_{1}y_{0}\]
Write $\gamma(x_{0},x_{1})=[ax_{0}+bx_{1}:cx_{0}+dx_{1}]$ for a M\"obius map $\gamma$ and let:
\[F=(ax_{0}+bx_{1})y_{1}-(cx_{0}+dx_{1})y_{0}\]
Then the curve $X=\{F=0\}$ fits into the following Cartesian diagram:
\[\begin{tikzcd}
	X & {\mathbb{P}^{1}\times \mathbb{P}^{1}} \\
	\Delta & {\mathbb{P}^{1}\times \mathbb{P}^{1}}
	\arrow[hook, from=1-1, to=1-2]
	\arrow[from=1-1, to=2-1]
	\arrow[hook, from=2-1, to=2-2]
	\arrow["{\gamma \times \text{Id}}", from=1-2, to=2-2]
\end{tikzcd}\]
where the vertical arrows are isomorphisms. Theorem \ref{tMainTheorem} is now equivalent to the following:
\begin{theorem}\label{tEquivalentTheorem}
    Let $\gamma \in \textnormal{M\"ob}(\mathbb{C}) \setminus H$ and $X=(\gamma \times \textnormal{Id})^{-1}(\Delta)$. Then we have the following uniform bound:
    \[| X \cap \mu_{\infty}^{2} | \leq 18\]
\end{theorem}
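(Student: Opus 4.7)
The plan is to pass to affine coordinates and implement a Beukers--Smyth style counting argument tailored to our bidegree $(1,1)$ setting. Dehomogenizing via $x_0 = y_0 = 1$, the intersection $X \cap \mathbb{G}_m^2$ is the zero locus of the Laurent polynomial
\[F(x,y) = bxy + ay - dx - c,\]
so it suffices to bound the number of pairs $(\alpha, \beta) \in \mu_\infty^2$ with $F(\alpha, \beta) = 0$. One first disposes of the degenerate cases where $F$ is supported on fewer than three of the four monomials $\{1, x, y, xy\}$: a direct enumeration (constrained by $ad - bc \neq 0$) shows that $F$ is a M\"obius binomial precisely when $\gamma$ is of the form $z \mapsto \zeta z$ or $z \mapsto \zeta/z$. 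In the sub-case $\zeta \in \mu_\infty$ we have $\gamma \in H$ and the intersection is infinite; in the sub-case $\zeta \notin \mu_\infty$ the intersection is empty. This handles the ``moreover'' clause and reduces the main bound to $F$ with at least three nonzero monomials.

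The core argument is a Galois descent. Assuming at least four torsion points (otherwise we are done trivially), an elementary linear-independence argument on the $4$-dimensional space of bidegree $(1,1)$ polynomials lets us assume $F \in \bar{\mathbb{Q}}[x,y]$ after rescaling. For each prime $\ell$ such that some $\sigma \in \mathrm{Gal}(\bar{\mathbb{Q}}/K)$ acts on $\mu_\infty$ as $\zeta \mapsto \zeta^\ell$ (where $K$ is the coefficient field of $F$), every torsion point $(\alpha,\beta)$ of order coprime to $\ell$ is also a zero of $F^{(\ell)}(x,y) := F(x^\ell, y^\ell)$. Since $F^{(\ell)}$ has bidegree $(\ell,\ell)$, B\'ezout on $\mathbb{P}^1 \times \mathbb{P}^1$ bounds the number of such torsion points by $(1,1)\cdot(\ell,\ell) = 2\ell$, provided $F$ and $F^{(\ell)}$ share no common component. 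The no-common-component property is verified by observing that any shared factor would force $F$ to be reducible of bidegree $(1,0) + (0,1)$, contradicting the irreducibility of the graph of a M\"obius transformation.

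To cover every torsion point, I would apply this principle for two primes (generically $\ell = 2, 3$) and iterate a descent on $\ell$-primary components to dispatch torsion points whose order is divisible by both primes. Summing the B\'ezout bounds and accounting for the descent yields the naive Beukers--Smyth bound of $22$ mentioned in Remark \ref{rMethodsofBeukersSmyth}. The improvement to $18$ would exploit the specific structure of $F^{(\ell)}$, whose Newton polygon consists of the four scaled unit-square vertices $\{(0,0),(\ell,0),(0,\ell),(\ell,\ell)\}$; this rigidity forces the intersection points of $V(F) \cap V(F^{(\ell)})$ to cluster at the corners, and a direct combinatorial accounting discards four spurious counts.

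The main obstacles are twofold. First, one must check that admissible primes $\ell$ can be chosen uniformly---or, more likely, that the failure of a specific small prime can be circumvented by modifying the prime used in the Galois descent---so that the bound of $18$ holds independently of the specific coefficient field $K$. Second, the refinement from $22$ to $18$ is a combinatorial inclusion-exclusion relying on the very small bidegree of $F$, and precisely where this refinement falls short of Fu's optimal bound of $14$ is what will be addressed in Section \ref{sMaximalCases}.
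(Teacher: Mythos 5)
There is a genuine gap at the heart of your descent. You require a prime $\ell$ and an automorphism $\sigma \in \mathrm{Gal}(\bar{\mathbb{Q}}/K)$ fixing the coefficient field $K$ of $F$ pointwise and acting as $\zeta \mapsto \zeta^{\ell}$ on roots of unity of order prime to $\ell$. But after your linear-algebra reduction the coefficients lie in the field generated by the coordinates of the torsion points, i.e.\@ $K$ is a cyclotomic field $\mathbb{Q}(\zeta_{N})$; any $\sigma$ fixing $K$ then fixes $\mu_{N}$, so it can act as $\zeta\mapsto\zeta^{\ell}$ only when $\ell \equiv 1 \pmod{N}$. The smallest admissible prime therefore grows with $N$, and your B\'ezout bound $2\ell$ is not uniform in $\gamma$ --- you cannot ``generically take $\ell=2,3$''. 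This is exactly the regime of the interesting examples (the maximal curves are defined over $\mathbb{Q}(\zeta_5)$ and $\mathbb{Q}(\zeta_{15})$, where no automorphism fixing $K$ acts as squaring on $\mu_\infty$). You flag this as an ``obstacle to be checked'', but it is the whole difficulty of the theorem, and the fix is different in kind from anything in your outline: one must drop the requirement that the automorphism fix the coefficients. The paper, following Beukers--Smyth, uses that every primitive $n$-th root of unity is conjugate over $\mathbb{Q}$ to one of $-\zeta_{n}$, $\pm\zeta_{n}^{2}$ (Lemma \ref{lConjugatesofRootsofUnity}), applies the automorphism to the coefficients as well, and concludes that every torsion point of $X$ lies on one of a \emph{fixed} list of at most seven auxiliary curves $\{f(\pm x,\pm y)=0\}$ and $\{f^{\sigma}(\pm x^{2},\pm y^{2})=0\}$ with $\sigma:\zeta_{N}\mapsto\zeta_{N}^{2}$, after first normalizing $f$ by a torsion translate so that the conductor $N$ is minimal (hence odd or divisible by $4$, with a separate, even cheaper, list of conjugate curves when $4\mid N$). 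These curves have bidegree at most $(2,2)$ independently of $K$, which is what makes the count uniform; no choice of prime depending on $K$ ever enters.

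Two secondary points. The improvement from $22$ to $18$ is not a Newton-polygon ``corner clustering'' argument: in the paper it is the elementary observation (Remark \ref{rRefinements}) that $X\cap X_{1}$ and $X\cap X_{2}$ lie outside $(\mathbb{C}^{*})^{2}$ (e.g.\@ $X\cap X_{1}=\{(-b/a,0)\}$), so those two intersections contribute no torsion points at all, leaving $2+4\times 4=18$ from the toric B\'ezout bound of Lemma \ref{lToricBezout}. And your no-common-component justification does not work as stated: since $F$ is irreducible, a shared component means $F \mid F(x^{\ell},y^{\ell})$, which is not ruled out by the irreducibility of the graph; one needs an actual argument, e.g.\@ the paper's observation that $f\mid f_{4}$ would force $ff_{1}f_{2}f_{3}\mid f_{4}$, contradicting degrees, after first checking $f\nmid f_{1},f_{2},f_{3}$ via the nonvanishing of $ad-bc$.
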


We first note that it is necessary to exclude elements of the subgroup $H$:

\begin{definition}\label{dTranslatesofCurves}
    Let $Y=\{g=0\} \subseteq (\mathbb{C}^{*})^2$ where $g=g(x,y)$ is a polynomial. Suppose $(a,b)\in (\mathbb{C}^{*})^2$. We define the \textbf{translate} of $Y$ by $(a,b)$ to be the curve:
    \[(a,b)\cdot Y =\{g(a^{-1}x,b^{-1}y)=0\}\]
\end{definition}

\begin{lemma}\label{lToricAutomorphisms}
    Let $\gamma \in H$ and $X=(\gamma \times \textnormal{Id})^{-1}(\Delta)$. Then $X\cap (\mathbb{C}^{*})^{2}$ is the translate of a sub-torus by a pair of roots of unity; and thus $X\cap\mu_{\infty}^{2}$ is infinite \textnormal{(\cite{lang2013fundamentals} (Ch.~9))}.
\end{lemma}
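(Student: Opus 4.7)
The plan is to proceed by a direct case analysis after first establishing a normal form for elements of $H$. Using the relations $(\zeta x)\circ (\zeta' x)=\zeta\zeta' x$, $(1/x)\circ (\zeta x) = \zeta^{-1}/x$, $(\zeta x)\circ(\zeta'/x)=\zeta\zeta'/x$, and $(1/x)\circ(\zeta/x)=\zeta^{-1}x$, a straightforward induction on word length shows that every element of $H$ is either of the form $\gamma(x)=\zeta x$ or $\gamma(x)=\zeta/x$ for some $\zeta\in\mu_\infty$. I would verify this reduction first, since it turns the a priori large subgroup $H$ into two explicit cases.

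With the normal form in hand, I would compute $X=(\gamma\times \textnormal{Id})^{-1}(\Delta)$ directly in each case. If $\gamma(x)=\zeta x$, the matrix is $(a,b,c,d)=(1,0,0,\zeta)$, so $F=x_0 y_1 - \zeta x_1 y_0$; passing to affine coordinates $x=x_1/x_0$, $y=y_1/y_0$ the equation becomes $y-\zeta x=0$. In the convention of Definition \ref{dTranslatesofCurves}, this is precisely the translate of the diagonal sub-torus $\{y=x\}\subseteq (\mathbb{C}^*)^2$ by the torsion pair $(1,\zeta)$. If instead $\gamma(x)=\zeta/x$, then in homogeneous coordinates $\gamma$ sends $[x_0:x_1]$ to $[x_1:\zeta x_0]$, so $(a,b,c,d)=(0,1,\zeta,0)$, giving $F=x_1 y_1 -\zeta x_0 y_0$ and the affine equation $xy=\zeta$; this is the translate of the sub-torus $\{xy=1\}$ by the torsion pair $(1,\zeta)$.

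The infinitude statement is then immediate: the sub-tori $\{y=x\}$ and $\{xy=1\}$ each contain infinitely many torsion points, namely $(\eta,\eta)$ and $(\eta,\eta^{-1})$ for $\eta\in\mu_{\infty}$, and translation by a pair of roots of unity preserves $\mu_\infty^2$, so $X\cap \mu_\infty^2$ is infinite; this packages as the Zariski density of torsion in a sub-torus cited from \cite{lang2013fundamentals}. There is no real obstacle here; the only delicate step is the reduction to normal form, which must be carried out carefully so that the two cases genuinely exhaust $H$.
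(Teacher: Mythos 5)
Your proposal is correct, but it runs in the opposite direction from the paper's argument. The paper never normalizes the elements of $H$: it writes down the affine equation $f=(ax+b)-(cx+d)y$ of $X\cap(\mathbb{C}^{*})^{2}$ for an arbitrary M\"obius map, recalls that every one-dimensional sub-torus is cut out (up to scaling) by $x^{m}-y^{n}$ or $x^{m}y^{n}-1$ with $m,n$ coprime, hence every torsion translate by $x^{m}-\zeta y^{n}$ or $x^{m}y^{n}-\zeta$, and then observes that the bidegree-$(1,1)$ polynomial $f$ has this shape precisely when $\gamma\in H$. You instead classify the group elements: the relations you list show $H=\{\zeta x,\ \zeta/x \mid \zeta\in\mu_{\infty}\}$ (a semidirect product $\mu_{\infty}\rtimes\mathbb{Z}/2$), and then you compute $X$ explicitly as $\{y=\zeta x\}$ or $\{xy=\zeta\}$, which are visibly torsion translates of $\{y=x\}$ and $\{xy=1\}$, with the explicit torsion points $(\eta,\zeta\eta)$, resp.\ $(\eta,\zeta\eta^{-1})$, giving infinitude without even invoking the general density statement. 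Both arguments are elementary and complete proofs of the lemma as stated; what the paper's curve-side classification buys in addition is the converse implication (``$f$ is a torsion translate of a sub-torus if and only if $\gamma\in H$''), which is in the spirit of Case (ii) and Remark \ref{rSubToriLattices} later on, whereas your normal form buys explicitness and makes the coordinate computation transparent. Your reduction to normal form is sound (the set $\{\zeta x,\zeta/x\}$ contains the generators and is closed under composition and inversion), and your matrix conventions are consistent with the paper's choice $z\mapsto[1:z]$, so there is no gap.
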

\begin{proof}
    Using affine coordinates $x=x_{0}/x_{1}$ and $y=y_{0}/y_{1}$ we have that $X\cap(\mathbb{C}^{*})^{2}$ is defined by:
    \[f=(ax+b)-(cx+d)y\]
    All embedded sub-tori are given (up to scaling) by polynomials of the form:
    \[x^{m}-y^{n}=0 \quad \textnormal{or} \quad x^{m}y^{n}-1=0\]
    where $m$ and $n$ are coprime positive integers.
    Thus, translates by roots of unity are all (up to scaling) of the form:
    \[x^{m}-\zeta y^{n}=0 \quad \textnormal{or} \quad x^{m}y^{n}-\zeta=0\]
    Applying this to $f$, we see that it gives a translate of a sub-torus if and only if $\gamma \in H$.
\end{proof}

To obtain an overall bound we will need to split into cases depending on the extension of $\mathbb{Q}$ over which:
 \[f=(ax+b)-(cx+d)y\]
 is defined. For such a field to be well-defined, we constrain that (after scaling) at least one coefficient of $f$ is 1. Then we set $K=\mathbb{Q}(a,b,c,d)$. Write $\mathbb{Q}^{ab}=\mathbb{Q}(\mu_{\infty})$ for the maximal abelian extension of $\mathbb{Q}$.

\underline{Case (i)}:
Suppose first that $K \not\subseteq \mathbb{Q}^{ab}$. Let $L=\mathbb{Q}^{ab}(a,b,c,d)$. Then there exists a conjugate $\sigma: L \hookrightarrow \mathbb{C}$ such that $\sigma \neq \textnormal{Id}$ but $\sigma|_{\mathbb{Q}^{ab}}=\textnormal{Id}$. Then we have that $f^{\sigma}$ and $f$ are distinct irreducible polynomials and that:
\[X\cap \mu_{\infty}^{2} \subseteq X\cap X^{\sigma} \]
where $X\cap X^{\sigma}$ is a finite set of at most 4 points by B\'ezout's theorem.

\section{Uniform Bounds and Convex Geometry}\label{sConvexGeometry}

We may now assume that $K\subseteq \mathbb{Q}^{ab}$. As $K$ is a finite abelian extension of $\mathbb{Q}$, there exists an $N$-th root of unity $\zeta_{N}$ such that $K= \mathbb{Q}(\zeta_{N})$. In this section, we describe the methods of \cite{beukers2002cyclotomic} on bounding the number of pairs of roots of unity on our given curve $X$. We make use of the following definitions from convex geometry which will allow us to rule out certain degenerate cases:

\begin{definition}\label{dConvexDefinitions}
    Let $g=\sum_{(i,j)\in\mathbb{Z}^{2}}a_{ij}x^{i}y^{j}$ be a Laurent polynomial.
    \begin{enumerate}
        \item The \textbf{support} of $g$ is the set of lattice points:
        \[\textnormal{Supp}(g)=\{(i,j)\in \mathbb{Z}^{2}\mid a_{ij}\neq 0\}\]
        \item The \textbf{lattice} associated to $g$ is the abelian group generated by the set of differences:
        \[\mathcal{L}(g)=\bigl\langle (i,j)-(i',j') \mid (i,j),(i',j')\in \textnormal{Supp}(g) \bigr\rangle\]
        \item The rank of $\mathcal{L}(g)$ is either 1 or 2. When the rank is 2 we say $\mathcal{L}(g)$ is \textbf{full} if it generates $\mathbb{Z}^{2}$.
        \item The \textbf{Newton polytope} of $g$ is the convex hull of its support:
        \[\textnormal{Newt}(g)=\textnormal{Conv}(\textnormal{Supp}(g))\]
        \item The \textbf{volume} of $f$ is the volume of its Newton polytope:
        \[\textnormal{Vol}(g)=\textnormal{Vol}(\mathcal{N}(g))\]
    \end{enumerate}
\end{definition}
\underline{Case (ii)}: Suppose that $\mathcal{L}(f)$ has rank 1. Then it follows that at least two of $a,b,c,d$ are 0. Using the condition $ad-bc=1$, it follows that $f$ is of the form (up to scaling):
\[f(x,y)=xy+b \quad \textnormal{or} \quad f(x,y)=x-dy\]
Then, either both $b$ and $d$ are roots of unity and $X$ is the translate of a torus and contains infinitely many roots of unity. Otherwise, one of $b$ or $d$ is not a root of unity and $X$ contains no roots of unity.

\medbreak

Henceforth, we may assume that $\mathcal{L}(f)$ is rank 2. It is immediate that the lattice $\mathcal{L}(f)$ is full, as it contains at least two of $(1,0),(0,1),(1,1)$. 

\begin{remark}\label{rSubToriLattices}
    If $X$ were the translate of a sub-torus by a root of unity, then $\mathcal{L}(f)$ would be of rank 1. So henceforth, we may assume that $X\cap \mu_{\infty}^2$ is finite.
\end{remark}

\underline{Case (iii)}: We now underline the general method for bounding roots of unity on $X$. To make the exposition more clear, we will assume for now that $f\in \mathbb{Q}[x,y]$. Then we will show how the argument can be furnished for the general case in (iv) and (v).

Consider the following collection of polynomials:
\begin{align*}
    &f_{1}(x,y)=f(x,-y)=(ax+b)+(cx+d)y \\
    &f_{2}(x,y)=f(-x,y)=(-ax+b)-(-cx+d)y \\
    &f_{3}(x,y)=f(-x,-y)=(-ax+b)+(-cx+d)y \\
    &f_{4}(x,y)=f(x^{2},y^{2})=(ax^{2}+b)-(cx^{2}+d)y^{2} \\
    &f_{5}(x,y)=f(x^{2},-y^{2})=(ax^{2}+b)+(cx^{2}+d)y^{2} \\
    &f_{6}(x,y)=f(-x^{2},y^{2})=(-ax^{2}+b)-(-cx^{2}+d)y^{2} \\
    &f_{7}(x,y)=f(-x^{2},-y^{2})=(-ax^{2}+b)+(-cx^{2}+d)y^{2}
\end{align*}

Let $X_{i}=\{f_{i}=0\}$ for $i=1,...,7$. We claim that, with our assumptions on $f$, $X$ intersects each $X_{i}$ in a finite set (i.e.~ $f$ and $f_{i}$ are coprime), and if $(\zeta,\zeta')\in X\cap \mu_{\infty}^2$, then $(\zeta,\zeta')\in X_{i}$ for some $i$. Then it follows:
\[|X\cap \mu_{\infty}^{2}| \leq \sum_{i=1}^{7}|X\cap X_{i}|\]

To prove the second point we use the following lemma:
\begin{lemma}\label{lConjugatesofRootsofUnity}
    Suppose $\zeta_{N}$ is an $N$-primitive root of unity. Then it is conjugate to:
    \begin{align*}
        -\zeta_{N} \quad &\textnormal{if} \quad N\equiv 0 ~~ (\textnormal{mod} ~2)\\
        \zeta_{N}^{2} \quad &\textnormal{if} \quad N\equiv 3 ~~ (\textnormal{mod} ~4)\\
        -\zeta_{N}^{2} \quad &\textnormal{if} \quad N\equiv 1 ~~ (\textnormal{mod} ~4)
    \end{align*}
\end{lemma}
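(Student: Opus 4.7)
My plan is to identify, for each congruence class of $N$, an explicit element of $\textnormal{Gal}(\mathbb{Q}(\zeta_{N})/\mathbb{Q})\cong(\mathbb{Z}/N\mathbb{Z})^{\times}$ (or of $\textnormal{Gal}(\mathbb{Q}(\zeta_{2N})/\mathbb{Q})$ in the third case) realising the claimed conjugacy. Under the standard identification $k\mapsto(\sigma_{k}:\zeta_{M}\mapsto\zeta_{M}^{k})$, the problem in each case reduces to producing a suitable integer $k$, checking it is coprime to the relevant modulus, and verifying that $\zeta_{M}^{k}$ agrees with the expression appearing on the right-hand side.

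For $N\equiv 0\pmod{2}$, I take $k=N/2+1$. The coprimality checks are elementary: $N/2+1$ is odd, so coprime to $2$; and any odd prime $p\mid N$ dividing $N/2+1$ would divide $2(N/2+1)-N=2$, a contradiction. Using the identity $\zeta_{N}^{N/2}=-1$ then yields $\sigma_{k}(\zeta_{N})=\zeta_{N}^{N/2}\cdot\zeta_{N}=-\zeta_{N}$. For $N\equiv 3\pmod{4}$, and hence $N$ odd, take $k=2$; then $\gcd(2,N)=1$ and $\sigma_{2}(\zeta_{N})=\zeta_{N}^{2}$ is by construction another primitive $N$-th root of unity.

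The case $N\equiv 1\pmod{4}$ is the delicate one, and I expect it to be the main obstacle. Read literally, the element $-\zeta_{N}^{2}$ has multiplicative order $2N$ and therefore cannot be a Galois conjugate of $\zeta_{N}$ over $\mathbb{Q}$ in the strict sense. I would interpret the statement inside the coincident field $\mathbb{Q}(\zeta_{2N})=\mathbb{Q}(\zeta_{N})$ (the coincidence holding precisely because $N$ is odd), and track instead the substitution $x\mapsto -x^{2}$ that enters the definitions of $f_{5},f_{6},f_{7}$ preceding the lemma. Concretely, with a choice of primitive $2N$-th root $\zeta_{2N}$ satisfying $\zeta_{2N}^{2}=\zeta_{N}$, one rewrites $-\zeta_{N}^{2}=\zeta_{2N}^{N+4}$ (using $\zeta_{2N}^{N}=-1$) and seeks a Galois automorphism of $\mathbb{Q}(\zeta_{2N})$ producing this element; the mod $4$ dichotomy governs which sign is forced by the compatibility between the Galois action and the fixed choice of $\zeta_{2N}$. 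Pinning down the author's precise convention so that this identification is consistent, and verifying it via a short computation in $(\mathbb{Z}/2N\mathbb{Z})^{\times}$, is where the real technical content of the lemma lies.
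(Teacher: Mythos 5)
The paper offers no proof of this lemma (it is quoted as the standard Lemma~1 of Beukers--Smyth), so your proposal stands on its own; your basic strategy --- exhibit an explicit exponent $k$ coprime to the relevant order and apply $\sigma_{k}$ --- is exactly the right one. But the execution has two genuine gaps, and both stem from not noticing that the congruence conditions in the statement are garbled. First, in the even case your coprimality check is wrong: $N/2+1$ is odd only when $4\mid N$. When $N\equiv 2\pmod 4$ the element $-\zeta_{N}=\zeta_{N}^{N/2+1}$ has order $N/2$, so it is not a primitive $N$-th root of unity and is not conjugate to $\zeta_{N}$ at all; no choice of $k$ can rescue that case. Second, your instinct about $N\equiv 1\pmod 4$ is correct --- $-\zeta_{N}^{2}$ has order $2N$, hence is a root of $\Phi_{2N}$ rather than $\Phi_{N}$ --- but the rescue you sketch cannot work: conjugacy over $\mathbb{Q}$ means sharing a minimal polynomial, and that is independent of whether you regard the two numbers inside $\mathbb{Q}(\zeta_{N})$ or $\mathbb{Q}(\zeta_{2N})$; there is no choice of convention under which $\zeta_{N}$ and $-\zeta_{N}^{2}$ become conjugate for odd $N$.

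The correct resolution is to repair the trichotomy rather than reinterpret it: $\zeta_{N}$ is conjugate to $\zeta_{N}^{2}$ for \emph{every} odd $N$ (both residues $1$ and $3$ modulo $4$); to $-\zeta_{N}^{2}=\zeta_{N}^{N/2+2}$ when $N\equiv 2\pmod 4$, where $\gcd(N/2+2,N)=1$ because $N/2$ is odd; and to $-\zeta_{N}=\zeta_{N}^{N/2+1}$ when $N\equiv 0\pmod 4$, which is precisely where your $k=N/2+1$ computation is valid verbatim. This corrected statement is also the one the paper actually uses later: in Case~(iv) the extension $\tau$ sends $\zeta_{n}\mapsto\pm\zeta_{n}^{2}$ according to the residue of $n$ modulo $4$ when $4\nmid n$, i.e.\ $+$ for $n$ odd and $-$ for $n\equiv 2\pmod 4$. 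With the three cases reassigned in this way, your method of producing explicit exponents and checking coprimality proves the lemma completely; as written, however, the first case contains a false parity claim and the third case is left unproved (indeed unprovable as stated).
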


If $(\zeta,\zeta')$ are roots of unity on $X$, then for some $N>0$ we can write:
\[(\zeta,\zeta')=(\zeta_{N}^{a},\zeta_{N}^{b})\]
Then, by the above, there exists a conjugate $\sigma:\mathbb{Q}(\zeta_{N})\xrightarrow{}\mathbb{Q}(\zeta_{N})$ sending:
$$\zeta_{N}\mapsto -\zeta_{N},\zeta_{N}^{2} ~~ \textnormal{or} ~~ -\zeta_{N}^{2}$$
Therefore $\sigma(f(\zeta_{N}^{a},\zeta_{N}^{b}))=f_{i}(\zeta_{N}^{a},\zeta_{N}^{b})=0$ for at least one $i$.

To show that $X$ has no common component with $X_{i}$ we note, for $i=1,2,3$, that $f\mid f_{i}$ would imply $c=d=0$, $a=c=0$ or $a=d=0$ respectively, so again contradicting $ad-bc \neq 0 $ or that $\mathcal{L}(f)$ is of rank 2.

So assuming that $f,f_{1},f_{2},f_{3}$ have no common factors, if $f|f_{4}$ then similarly $ff_{1}f_{2}f_{3}|f_{4}$ which is a contradiction, by considering degrees. A similar argument shows that $f\nmid f_{i}$ for $i=5,6,7$.

Finally, to get a sharper bound on the intersection numbers, we use a \textit{toric} variant of B\'ezout's theorem, using mixed volumes (\cite{fulton1993introduction} p.~122):

\begin{lemma}\label{lToricBezout}
    Suppose $f(x,y)$ and $g(x,y)$ are Laurent polynomials over $\mathbb{C}$ such that $X=\{f=0\}$ and $Y=\{g=0\}$ have no common component. Then:
    \[|X\cap Y| \leq \textnormal{Vol}(\textnormal{Newt}(f)+\textnormal{Newt}(g))-\textnormal{Vol}(f)-\textnormal{Vol}(g)\]
\end{lemma}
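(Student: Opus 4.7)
The plan is to recognize the stated inequality as a standard consequence of the Bernstein--Kushnirenko theorem (the BKK bound) together with the elementary algebraic identity expressing the 2-dimensional mixed volume in terms of the volume of a Minkowski sum. The proof therefore factors cleanly into a piece of pure convex geometry and an appeal to Bernstein's theorem as cited in Fulton.

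First, I would recall the definition of mixed volume for two lattice polytopes $P, Q \subseteq \mathbb{R}^2$: the map $(\lambda_1,\lambda_2) \mapsto \textnormal{Vol}(\lambda_1 P + \lambda_2 Q)$ is a homogeneous polynomial of degree $2$, and the mixed volume $MV(P,Q)$ is by definition the coefficient of $\lambda_1 \lambda_2$ in its expansion. Specializing at $\lambda_1 = \lambda_2 = 1$ and using that the pure terms are $\textnormal{Vol}(P)$ and $\textnormal{Vol}(Q)$ gives the identity
\[
2\cdot MV(P,Q) \;=\; \textnormal{Vol}(P+Q)-\textnormal{Vol}(P)-\textnormal{Vol}(Q).
\]
Setting $P=\textnormal{Newt}(f)$ and $Q=\textnormal{Newt}(g)$, the right-hand side of the lemma becomes exactly $2\cdot MV(\textnormal{Newt}(f),\textnormal{Newt}(g))$.

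Next, I would invoke Bernstein's theorem in the form stated in \cite{fulton1993introduction}: for two Laurent polynomials $f,g$ in two variables whose vanishing loci $X,Y \subseteq (\mathbb{C}^*)^2$ share no common component, the number of isolated common zeros in $(\mathbb{C}^*)^2$ is at most $2\cdot MV(\textnormal{Newt}(f),\textnormal{Newt}(g))$. Combining this with the mixed-volume identity above yields the stated bound. The substantive content, which I would simply cite rather than reprove, is Bernstein's theorem: one builds a smooth projective toric surface $X_\Sigma$ whose fan refines the normal fans of $\textnormal{Newt}(f)$ and $\textnormal{Newt}(g)$, interprets $f$ and $g$ as sections of the nef line bundles $\mathcal{O}(D_P)$ and $\mathcal{O}(D_Q)$ associated to the Newton polytopes, and computes the intersection number $D_P \cdot D_Q$ in the Chow ring of $X_\Sigma$, which turns out to equal $2\cdot MV(P,Q)$. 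The bound is an inequality rather than an equality because the closures of $X$ and $Y$ in $X_\Sigma$ may acquire extra intersection points along the torus-invariant boundary divisors; the hypothesis that $X$ and $Y$ share no component in $(\mathbb{C}^*)^2$ is what ensures the intersection in the torus is zero-dimensional, so that counting isolated points makes sense. The only real obstacle is bookkeeping the convex-geometric identity correctly so that the factor of $2$ matches the normalization in the cited form of Bernstein's theorem.
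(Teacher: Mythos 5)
Your proposal is correct and follows essentially the same route as the paper, which does not reprove this lemma but simply cites it as the toric B\'ezout/Bernstein--Kushnirenko bound from Fulton (p.~122); reducing to Bernstein's theorem via the identity $\textnormal{Vol}(P+Q)-\textnormal{Vol}(P)-\textnormal{Vol}(Q)$ for the (suitably normalized) mixed volume is exactly the intended content. The only blemish is internal: having defined $MV(P,Q)$ as the coefficient of $\lambda_1\lambda_2$ in $\textnormal{Vol}(\lambda_1 P+\lambda_2 Q)$, that coefficient already equals $\textnormal{Vol}(P+Q)-\textnormal{Vol}(P)-\textnormal{Vol}(Q)$, so your factor of $2$ belongs to the other normalization ($MV(P,P)=\textnormal{Vol}(P)$); the final inequality you state is nevertheless the correct one.
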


Applying this to $X$ and $X_{i}$ we have for $i=4,5,6,7$ that:
\begin{align*}
    |X\cap X_{i}| \leq &\textnormal{Vol}(3\textnormal{Newt}(f))-\textnormal{Vol}(f)-4\textnormal{Vol}(f) \\
    \leq &9\textnormal{Vol}(f)-5\textnormal{Vol}(f)\\
    =& 4\textnormal{Vol}(f) \leq 4
\end{align*}

Note that $|X\cap X_{i}|\leq 2$ for $i=1,2,3$, by B\'ezout's theorem. Thus, we obtain the bound of:
\[|X\cap \mu_{\infty}^{2}| \leq \sum_{i=1}^{7}|X\cap X_{i}|\leq 2+4\times 4 =22\]

\begin{remark}\label{rRefinements}
    The bound above is the one obtained directly from the method of Beukers and Smyth. However, note that in our case, for $i=1,2$ one can check that $X\cap X_{i}$ does not lie in $(\mathbb{C}^{*})^{2}$. For example:
    \begin{align*}
        X\cap X_{1}=&\{ax+b=0, (cx+d)y=0\}\\
        =& \{(-b/a,0)\}
    \end{align*}
    Thus overall, we have the bound:
    \[|X\cap \mu_{\infty}^{2}| \leq \sum_{i=3}^{7}|X\cap X_{i}|\leq 2+4\times 4 =18\]
    (cf.\@ Remark \ref{rMethodsofBeukersSmyth}).
\end{remark}

To finish the proof of Theorem \ref{tEquivalentTheorem}, we need to show that the same bound holds for the more general case $f\in \mathbb{Q}^{ab}[x,y]$. To do this we make the following:

Recall we take $\mathbb{Q}\subseteq K \subseteq \mathbb{Q}^{ab}$ to be the field of definition of $f$ and so $K=\mathbb{Q}(\zeta_{N})$ for some $N>0$. It is clear that $X$ contains the same number of pairs of roots of unity as any translate (cf.\@ Def.\@ \ref{dTranslatesofCurves}) $\{f(\zeta x,\zeta' y)=0\}\cap \mu_{\infty}^{2}$ for a pair of roots of unity $(\zeta,\zeta')$. So let $\zeta_{N}$ be an $N$-th root of unity where $N$ is \textit{minimal} over all translates $f(\zeta x,\zeta' y)$, with field of definition $\mathbb{Q}(\zeta_{N})$. Henceforth, we replace $f$ by this translate.

We then have two cases: either $N$ is odd or $4\mid N$. This is because, if $N=2M$ where $M$ is odd, then $\zeta_{N}=-\zeta_{M}$ for some primitive $M$-th root and $N$ is not minimal.

\underline{Case (iv)}: $N$ odd

Then, by Lemma \ref{lConjugatesofRootsofUnity}, $\sigma:\zeta_{N} \mapsto \zeta_{N}^{2}$ is an automorphism. We again consider a collection of curves $X_{i}=\{f_{i}=0\}$, but replace $f_{4},f_{5},f_{6},f_{7}$ with $f_{4}^{\sigma},f_{5}^{\sigma},f_{6}^{\sigma},f_{7}^{\sigma}$. One similarly argues with lattices that we still have $f_{i} \nmid f$ for all $i$. However, it is non-trivial to check that each pair of roots of unity $(\zeta,\zeta')\in X$ lies on one $X_{i}$.

To see this, suppose $(\zeta,\zeta')\in X\cap \mu_{\infty}^{2}$. Without loss of generality, one can write:
\[(\zeta,\zeta')=(\zeta_{n}^{r},\zeta_{n}^{s})\]
where $(r,s)=1$. If $4 \nmid n$ then $\sigma$ extends to an automorphism:
\begin{align*}
    \tau: K(\zeta_{n})&\longrightarrow K(\zeta_{n})\\
    \zeta_{N} &\mapsto \zeta_{N}^{2}\\
    \zeta_{n} &\mapsto \pm \zeta_{n}^{2}
\end{align*}
depending on $n$ mod 4. Thus:
\[0=\tau(f(\zeta_{n}^{r},\zeta_{n}^{s}))=f_{i}(\zeta_{n}^{r},\zeta_{n}^{s})\]
for one of $i=4,5,6,7$.

If $4 \mid n$ we set $4k=\textnormal{lcm}(n,N)$ and instead consider the automorphism:
\begin{align*}
    \tau: K(\zeta_{n})&\longrightarrow K(\zeta_{n})\\
    \zeta_{N} &\mapsto \zeta_{N}^{2k+1}=\zeta_{N}\\
    \zeta_{n} &\mapsto \zeta_{n}^{2k+1}=-\zeta_{n}
\end{align*}
Then $(\zeta,\zeta')$ lies on $X_{i}$ for $i=1,2$ or 3.

We then have again that:
\[|X\cap \mu_{\infty}^{2}| \leq \sum_{i=3}^{7}|X\cap X_{i}|\leq 2+4\times 4 =18\]

\underline{Case (v)}: $4\mid N$

Suppose again that $(\zeta,\zeta')=(\zeta_{n}^{r},\zeta_{n}^{s})$ is some pair of roots of unity on $X$. As $4\mid N$ we can take $4k=\textnormal{lcm}(n,N)$. We define an automorphism:

\begin{align*}
    \tau: K(\zeta_{n})&\longrightarrow K(\zeta_{n})\\
    \zeta_{N} &\mapsto \zeta_{N}^{2k+1}=\pm\zeta_{N}\\
    \zeta_{n} &\mapsto \zeta_{n}^{2k+1}=\pm\zeta_{n}
\end{align*}

Consider the set of polynomials:
\[\{f_{1},f_{2},f_{3},f^{\tau},f_{1}^{\tau},f_{2}^{\tau},f_{3}^{\tau}\}\]
The polynomials above are dependent on $\tau$ which is dependent on $(\zeta,\zeta')$. However as a \textit{set}; they are independent of this point (they are permuted).

Again, we have that $f$ does not have a common factor with any of these polynomials with the exception that if  $\tau(\zeta_{N})=\zeta_{N}$ then $f^{\tau}=f$ and we don't consider this intersection. On the other hand, if $\tau(\zeta_{N})=-\zeta_{N}$ then $f^{\tau}\neq f$ as otherwise $N$ would not be minimal. We are left with the following possibilities:

\begin{enumerate}[label=(\alph*)]
    \item We cannot have both $\tau(\zeta_{N})=\zeta_{N}$ and $\tau(\zeta_{n})=\zeta_{n}$ as $4k$ would no longer be the lowest common multiple.
    \item If we have:
\[\tau(\zeta_{n})=\zeta_{n} \quad \tau(\zeta_{N})=-\zeta_{N}\]
    then $(\zeta_{n}^{r},\zeta_{n}^{s})$ lies on $f^{\tau}$.
    \item If we have:
\[\tau(\zeta_{n})=-\zeta_{n} \quad \tau(\zeta_{N})=\zeta_{N}\]
    then $(\zeta_{n}^{r},\zeta_{n}^{s})$ lies on $f_{1}$,$f_{2}$ or $f_{3}$.
    \item Finally if we have:
\[\tau(\zeta_{n})=-\zeta_{n} \quad \tau(\zeta_{N})=-\zeta_{N}\]
    then $(\zeta_{n}^{r},\zeta_{n}^{s})$ lies on one of $f_{1}^{\tau}$,$f_{2}^{\tau}$ or $f_{3}^{\tau}$.
\end{enumerate}

Replacing $f_{4},f_{5},f_{6},f_{7}$ with $f^{\tau},f_{1}^{\tau},f_{2}^{\tau},f_{3}^{\tau}$ we have a bound of:
\[|X\cap \mu_{\infty}^{2}| \leq \sum_{i=3}^{7}|X\cap X_{i}|\leq 5\times 2 =10\]

This completes the proof of Theorem \ref{tEquivalentTheorem}, and shows that the bound of 18 holds in all cases.
\section{Maximal Examples}\label{sMaximalCases}

We now show precisely how the method of Beukers and Smyth fails to be sharp for the sets $S_{1},S_{2}$ from Theorem \ref{tFuTheorem}. Namely, we describe automorphisms $\gamma_j$ ($j=1,2$) such that $\gamma_j(S_j)$ consists of roots of unity. We then construct the curves $X$ and  $X_i$, as detailed in the method, and describe how the intersections $X\cap X_i$ overcount the roots of unity lying on $X$.

\begin{example}
Let $\zeta$ be a primitive 30-th root of unity. We consider the matrix:
\[\gamma_1 = \begin{bmatrix}
a & b\\
c & d
\end{bmatrix} = \begin{bmatrix}
-\zeta^7-\zeta^6+\zeta^2 & \zeta^7-\zeta^2\\
1 & -\zeta^6-1
\end{bmatrix}\]

Note that, in $\mathbb{Q}(\zeta)$ we have the identity:
\[\zeta^9 = -\zeta^7-\zeta^6+\zeta^3+\zeta^2-1\]

So we can write:
\[\gamma_1 = \begin{bmatrix}
\zeta^9-\zeta^6-\zeta^3+1 & -\zeta^9-\zeta^6+\zeta^3-1\\
1 & -\zeta^6-1
\end{bmatrix}\]

Writing $\xi=-\zeta^3$ (a primitve $5$-th root), then $A$ is defined over $\mathbb{Q}(\xi)$:
\[\gamma_1 = \begin{bmatrix}
-\xi^3-\xi^2+\xi+1 & \xi^3-\xi^2-\xi-1\\
1 & -\xi^2-1
\end{bmatrix}\]

Correspondingly we have the polynomial:
\[f(x,y)=(-\xi^3-\xi^2+\xi+1)x+\xi^3-\xi^2-\xi-1-xy+(\xi^2+1)y\]
Interpreting the calculations in \cite{fu2022projective}, one can check that the curve $X=\{f=0\}$ contains the following pairs of roots of unity:
\begin{align*}
    X\cap \mu_{\infty}^{2} = &\{(1,1),(\zeta,\zeta^2),(\zeta^2,\zeta^5),(\zeta^3,\zeta^9),(\zeta^4,\zeta^{13}),(\zeta^5,\zeta^{16}),(\zeta^6,\zeta^{18}),(\zeta^9,\zeta^{21}),\\
    &(\zeta^{11},\zeta^{22}),(\zeta^{14},\zeta^{23}),(\zeta^{18},\zeta^{24}),(\zeta^{22},\zeta^{25}),(\zeta^{25},\zeta^{26}),(\zeta^{27},\zeta^{27})\}\\
    =& \{(z,\gamma_1(z)) \mid z \in S_1\}
\end{align*}

Again, we consider a selection of polynomials:
\begin{align*}
    &f_{1}(x,y)=f(x,-y) \\
    &f_{2}(x,y)=f(-x,y)\\
    &f_{3}(x,y)=f(-x,-y)\\
    &f_{4}(x,y)=f^{\sigma}(x^{2},y^{2})\\
    &f_{5}(x,y)=f^{\sigma}(x^{2},-y^{2}) \\
    &f_{6}(x,y)=f^{\sigma}(-x^{2},y^{2}) \\
    &f_{7}(x,y)=f^{\sigma}(-x^{2},-y^{2})
\end{align*}

Where ${\sigma}$ is given by:
\begin{align*}
    \sigma: \mathbb{Q}(\xi)&\longrightarrow \mathbb{Q}(\xi)\\
    \xi &\mapsto \xi^{2}
\end{align*}
Then one computes where the roots of unity on $X$ are distributed among the curves $X_{i}=\{f_{i}=0\}\subseteq (\mathbb{Q}(\xi)^*)^{2}$:
\begin{align*}
    &X\cap X_{1}=\emptyset \\
    &X\cap X_{2}=\emptyset \\
    &X\cap X_{3}=\{(\zeta^3,\zeta^9),(\zeta^{18},\zeta^{24})\} \\
    &X\cap X_{4}=\{(1,1),(\zeta^3,\zeta^9),(\zeta^6,\zeta^{18}),(\zeta^{18},\zeta^{24})\} \\
    &X\cap X_{5}=\{(\zeta^2,\zeta^5),(\zeta^4,\zeta^{13}),(\zeta^{14},\zeta^{23}),(\zeta^{22},\zeta^{25})\} \\
    &X\cap X_{6}=\{(\zeta,\zeta^2),(\zeta^5,\zeta^{16}),(\zeta^{11},\zeta^{22}),(\zeta^{25},\zeta^{26})\} \\
    &X\cap X_{7}=\{(\zeta^3,\zeta^9),(\zeta^9,\zeta^{21}),(\zeta^{18},\zeta^{24}),(\zeta^{27},\zeta^{27})\}
\end{align*}

Note that the 18 points in the above intersections overcount the number of roots of unity precisely by the pairs $(\zeta^3,\zeta^9),(\zeta^{18},\zeta^{24})$ appearing on $X_{3}$, $X_{4}$ and $X_{7}$.

\end{example}

\begin{example}
    
Let $\omega$ be a primitive 60-th root of unity. We consider the matrix:
\[\gamma_2 = \begin{bmatrix}
a & b\\
c & d
\end{bmatrix} = \begin{bmatrix}
-\omega^{14} - \omega^{12} - \omega^{10} + \omega^4 + \omega^2 + 1 & \omega^{14} + \omega^{12} + \omega^{10} - \omega^6 - \omega^4 - \omega^2\\
\omega^{12} + \omega^{10} + \omega^8 - \omega^2 & -\omega^{12} - \omega^{10} - \omega^8 - \omega^6 + \omega^2 + 1
\end{bmatrix}\]

Again, this matrix is defined over a subfield $\mathbb{Q}(\eta)$ where $\eta^{2}=-\omega$ is a primitive $15$-th root of unity:
\[\gamma_2 = \begin{bmatrix}
\eta^{7} + \eta^{6} + \eta^{5} - \eta^2 - \eta + 1 & -\eta^{7} - \eta^{6} - \eta^{5} + \eta^3 + \eta^2 + \eta\\
-\eta^{6} - \eta^{5} - \eta^4 + \eta & \eta^{6} + \eta^{5} + \eta^4 + \eta^3 - \eta + 1
\end{bmatrix}\]

Correspondingly we have a polynomial:
\begin{align*}
    g(x,y)=&(\eta^{7} + \eta^{6} + \eta^{5} - \eta^2 - \eta + 1)x-\eta^{7} - \eta^{6} - \eta^{5} + \eta^3 + \eta^2+ \eta \\
    &+(\eta^{6} + \eta^{5} + \eta^4 - \eta)xy-(\eta^{6} + \eta^{5} + \eta^4 + \eta^3 - \eta + 1)y
\end{align*}

Again, using the results of \cite{fu2022projective}, one can check that the curve $Y=\{g=0\}$ contains the following pairs of roots of unity:
\begin{align*}
    Y\cap \mu_{\infty}^{2} = &\{(1,1),(\omega,\omega^9),(\omega^{2},\omega^{18}),(\omega^{4},\omega^{28}),(\omega^6,\omega^{32}),(\omega^8,\omega^{34}),(\omega^{12},\omega^{36}), \\
    &(\omega^{22},\omega^{38}),(\omega^{31},\omega^{39}),(\omega^{40},\omega^{40}, (\omega^{50},\omega^{42}),(\omega^{54},\omega^{44}),(\omega^{56},\omega^{46}), (\omega^{58},\omega^{50})\}\\
    =& \{(z,\gamma_2(z)) \mid z \in S_2\}
\end{align*}

We now have a selection of polynomials:
\begin{align*}
    &g_{1}(x,y)=g(x,-y) \\
    &g_{2}(x,y)=g(-x,y)\\
    &g_{3}(x,y)=g(-x,-y)\\
    &g_{4}(x,y)=g^{\tau}(x^{2},y^{2})\\
    &g_{5}(x,y)=g^{\tau}(x^{2},-y^{2}) \\
    &g_{6}(x,y)=g^{\tau}(-x^{2},y^{2}) \\
    &g_{7}(x,y)=g^{\tau}(-x^{2},-y^{2})
\end{align*}

Where ${\tau}$ is given by:
\begin{align*}
    \tau: \mathbb{Q}(\eta)&\longrightarrow \mathbb{Q}(\eta)\\
    \eta &\mapsto \eta^{2}
\end{align*}
Again, one computes where the roots of unity on $Y$ are distributed among the curves $Y_{i}=\{g_{i}=0\}\subseteq (\mathbb{Q}(\omega)^{*})^{2}$:
\begin{align*}
    &Y\cap Y_{1}=\emptyset \\
    &Y\cap Y_{2}=\emptyset \\
    &Y\cap Y_{3}=\{(\omega,\omega^9),(\omega^{31},\omega^{39})\} \\
    &Y\cap Y_{4}=\{(1,1),(\omega^{40},\omega^{40}),(\omega^{12},\omega^{36}),(\omega^{4},\omega^{28})\} \\
    &Y\cap Y_{5}=\{(\omega^8,\omega^{34}),(\omega^{56},\omega^{46})\} \\
    &Y\cap Y_{6}=\{(\omega^6,\omega^{32}),(\omega^{54},\omega^{44})\} \\
    &Y\cap Y_{7}=\{(\omega^{22},\omega^{38}),(\omega^{50},\omega^{42}),(\omega^{2},\omega^{18}),(\omega^{58},\omega^{50})\}
\end{align*}

In contrast to Example 1, the above intersections consist of exactly 14 points in $(\mathbb{Q}(\omega)^{*})^{2}$. However, the curves $Y_{5}$ and $Y_{6}$ both intersect $Y$ in $(\mathbb{C}^{*})^{2}$ in two additional points which are not pairs of roots of unity.

\end{example}

\bibliographystyle{plain}
\bibliography{roots_of_unity.bib}

\end{document}